\documentclass[11pt, a4paper]{amsart}
\usepackage[a4paper, margin=1in]{geometry}
\usepackage{amsmath}
\usepackage{amssymb}
\usepackage{mathtools}
\usepackage{bbm}

\usepackage{microtype}
\usepackage{xcolor}
\usepackage[numbers, sort&compress]{natbib}
\usepackage[
 colorlinks = true,
 linkcolor  = black,
 citecolor  = black,
 urlcolor   = black,
 pdfauthor  = {Aryaman Chandra and Sudhir Ranjan Jain},
 pdftitle   = {On the Spectral Properties of Discrete Landscape Functions},
 pdfsubject = {Mathematical Physics},
 ]{hyperref}
\usepackage{graphicx}
\usepackage{booktabs}
\usepackage{multirow}
\usepackage{array}
\usepackage{tikz}
\usepackage{pgfplots}
\pgfplotsset{compat=1.17}
\usetikzlibrary{arrows.meta,positioning}

\theoremstyle{plain}
\newtheorem{theorem}{Theorem}[section]

\newtheorem{proposition}[theorem]{Proposition}
\newtheorem{corollary}[theorem]{Corollary}
\newtheorem{conjecture}[theorem]{Conjecture}

\theoremstyle{definition}

\theoremstyle{remark}

\newtheorem*{remark*}{Remark}

\newcommand{\Q}{\mathbb{Q}}

\newcommand{\GalQ}{\operatorname{Gal}}

\title{On the Spectral Properties of Discrete Landscape Functions}

\author{Aryaman Chandra}
\address{Heritage Xperiential Learning School,
CRPF Road, Sector 62, Gurugram, Haryana 122005, India}
\email{aryaman.chandra@gmail.com}

\author{Sudhir Ranjan Jain}
\address{Department of Information Technology\\
 K. J. Somaiya School of Engineering\\
 Somaiya Vidyavihar University\\
 Vidyavihar, Mumbai 400072, India}
\email{}

\subjclass[2020]{37C25, 11R18, 47A75, 11B39}

\keywords{Landscape functions, discrete Laplacian, spectral coefficients,
 parity, cyclotomic fields, algebraic degrees, Lefschetz fixed-point theorem}

\date{}

\begin{document}
	\begin{abstract}
We study the landscape function on a discretized interval. The discrete landscape has an explicit closed form, and its spectral coefficients can be computed exactly. We show that these coefficients lie in explicit abelian extensions of $\mathbb Q$, and obtain the bound
\[
[\mathbb Q(c_k(N)):\mathbb Q]\leq\varphi(N)
\]
for every odd $k$. For the first coefficient, exact computation for $3\leq N\leq30$ gives the full degree $\varphi(N)$ in every case, motivating the Chandra--Jain conjecture that $[\mathbb Q(c_1(N)):\mathbb Q]=\varphi(N)$ for all $N\ge3$. We then reduce the higher modes to the coprime case and discuss the remaining degree question. We also discuss connections with parity, the Arnold cat map, and Lefschetz numbers. The Chandra--Jain conjecture has since been proved by Q.~Zhou~\cite{Zhou2026}.
\end{abstract}
	
	\maketitle
	%==============================================================
	
	\section{Introduction}
\label{sec:intro}

Some of the simplest differential equations hide surprisingly rich
arithmetic.

In their work on localization, Marcel Filoche and Svitlana Mayboroda
introduced the \emph{landscape function}, now also called the
Filoche--Mayboroda landscape, as the central object of what is often called
landscape theory. The idea is simple: instead of solving an eigenvalue
problem separately for every eigenfunction, solve one boundary-value
problem
\[
Lu=1
\]
with the appropriate boundary conditions. The resulting function $u$
gives a single geometric picture of where eigenfunctions can localize.
In the simplest positive elliptic settings, its valleys act as barriers
that help separate regions of localization. \citep{FilocheMayboroda2011,FilocheMayboroda2012}

We begin with the simplest possible example. On the interval $(0,1)$ with
Dirichlet boundary conditions,
\[
-u''(x)=1,
\qquad
u(0)=u(1)=0,
\]
so
\[
u(x)=\frac{x(1-x)}2.
\]
The landscape is only a parabola. Yet it already contains the main idea
of this paper: a simple object, once viewed through its natural
symmetries, can reveal structure that is not obvious from the original
equation.

The Dirichlet eigenfunctions are
\[
\psi_n(x)=\sqrt2\sin(n\pi x).
\]
The landscape is symmetric under reflection,
\[
u(1-x)=u(x),
\]
whereas
\[
\psi_n(1-x)=(-1)^{n+1}\psi_n(x).
\]
Thus the reflection separates the modes into two symmetry classes, and
only the compatible class appears in the expansion of $u$:
\[
c_n=0
\qquad\text{for even }n.
\]
The cancellation is encoded in the simple factor
\[
1-(-1)^n.
\]

This gives the first question of the paper: what information survives
when this elementary landscape is put on a lattice?

On a lattice of size $N$, the discrete Poisson equation has the exact
solution
\[
u_N(j)=\frac{j(N-j)}2.
\]
Its spectral coefficients therefore admit the closed form
\[
c_k(N)
=
\frac14\sqrt{\frac2N}
\cot\left(\frac{k\pi}{2N}\right)
\csc^2\left(\frac{k\pi}{2N}\right),
\qquad k\ \text{odd}.
\]
The appearance of rational multiples of $\pi$ turns the spectral problem
into an arithmetic one. The coefficients are algebraic numbers lying in
explicit number fields, and their algebraic conjugates can be described
through the symmetries of the corresponding cyclotomic fields.

There is a useful echo of the same pattern in a different setting. For the
Arnold cat map, the alternating expression
\[
1-\operatorname{tr}(A)+\det(A)
\]
is its Lefschetz number. The underlying mathematics is different: here
reflection acts on Fourier modes, while Lefschetz theory acts on the
cohomological degrees of the torus. But the pattern is familiar. A
symmetry organizes the data, cancellation identifies what disappears,
and an invariant records what remains.

For the first spectral coefficient, the algebraic symmetry gives the
universal upper bound
\[
[\mathbb Q(c_1(N)):\mathbb Q]\leq\varphi(N).
\]
Exact computation for every $3\leq N\leq30$ attains this bound. This
uniform pattern leads to our main conjecture:
\[
[\mathbb Q(c_1(N)):\mathbb Q]=\varphi(N)
\qquad(N\geq3).
\]

The value $N=5$ is representative rather than exceptional: its exact
minimal polynomial has degree $4=\varphi(5)$. The higher coefficients
$c_k(N)$ lead to a further question, since additional arithmetic
symmetries can occur when $k$ and $N$ have a common divisor.

\section{Continuous landscapes}
\label{sec:continuous}

We record the elementary facts about the continuous landscape that will
be used below.

\begin{proposition}
\label{prop:continuous-landscape}
Let $u$ solve
\[
-u''(x)=1,
\qquad 0<x<1,
\qquad
u(0)=u(1)=0.
\]
Then
\[
u(x)=\frac{x(1-x)}2.
\]
The Dirichlet eigenfunctions of $-d^2/dx^2$ are
\[
\psi_n(x)=\sqrt2\sin(n\pi x),
\qquad n\ge1,
\]
with eigenvalues
\[
\lambda_n=(n\pi)^2.
\]
If
\[
u(x)=\sum_{n=1}^{\infty}b_n\psi_n(x),
\]
then
\[
b_n=
\frac{\sqrt2}{(n\pi)^3}\bigl(1-(-1)^n\bigr).
\]
In particular,
\[
b_n=0
\qquad\text{for even }n.
\]
\end{proposition}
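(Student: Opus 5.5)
The plan is a direct computation in three parts. First, integrate $-u''=1$ twice to get $u(x)=-x^2/2+ax+b$. The conditions $u(0)=0$ and $u(1)=0$ force $b=0$ and $a=1/2$, so $u(x)=x(1-x)/2$. Uniqueness holds because the difference of two solutions is affine and vanishes at both endpoints.

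Second, the eigenpairs. Solve $-\psi''=\lambda\psi$ with $\psi(0)=\psi(1)=0$. For $\lambda\le0$ the only solution is zero; integrating $\psi\psi''$ by parts gives $\lambda\int\psi^2=\int(\psi')^2$, which rules this out. For $\lambda=\mu^2>0$ the solution is $\psi=\sin(\mu x)$, and the condition at $x=1$ forces $\mu=n\pi$. Since $\int_0^1\sin^2(n\pi x)\,dx=1/2$, the normalization is $\sqrt2$. Orthonormality and completeness of $\{\psi_n\}$ in $L^2(0,1)$ are the standard Fourier sine series facts, which I will quote. Completeness justifies writing $u=\sum b_n\psi_n$ with
\[
b_n=\langle u,\psi_n\rangle=\int_0^1 u(x)\sqrt2\sin(n\pi x)\,dx .
\]

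Third, compute $b_n$. Rather than integrating the polynomial against the sine directly, I will use self-adjointness. Because $u$ and $\psi_n$ both vanish at $0$ and $1$, two integrations by parts give
\[
\langle -u'',\psi_n\rangle=\langle u,-\psi_n''\rangle=\lambda_n b_n .
\]
Since $-u''=1$, this yields
\[
b_n=\frac{1}{(n\pi)^2}\int_0^1\sqrt2\sin(n\pi x)\,dx=\frac{\sqrt2}{(n\pi)^2}\cdot\frac{1-\cos(n\pi)}{n\pi}=\frac{\sqrt2}{(n\pi)^3}\bigl(1-(-1)^n\bigr).
\]
The vanishing $b_n=0$ for even $n$ follows immediately. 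It can also be seen from the reflection symmetry noted in the introduction: $u(1-x)=u(x)$ while $\psi_n(1-x)=-\psi_n(x)$ for even $n$.

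There is no real obstacle here. The only points needing care are the boundary terms in the integration by parts and the appeal to completeness, and both are standard.
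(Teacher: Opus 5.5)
Your proof is correct; the difference from the paper lies in how you compute the coefficient. The paper substitutes the explicit parabola and integrates $x(1-x)\sin(n\pi x)$ by parts twice, with all boundary terms vanishing, until only $\int_0^1\sin(n\pi x)\,dx$ remains. You instead move $-d^2/dx^2$ across the inner product by Green's identity. This uses only the equation $-u''=1$ and the Dirichlet conditions, and gives $b_n=\lambda_n^{-1}\langle 1,\psi_n\rangle$.

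Mechanically these are the same two integrations by parts, but your packaging has some advantages. It never uses the closed form of $u$, and it yields the general rule $b_n=\langle f,\psi_n\rangle/\lambda_n$ for $-u''=f$. It also shows that the parity factor $1-(-1)^n$ comes entirely from $\langle 1,\psi_n\rangle$. Finally, it is the argument the paper itself uses later for the discrete coefficients in Theorem~\ref{thm:main}, where $c_k(N)\mu_k=\sqrt{2/N}\sum_j\sin(k\pi j/N)$, so the continuous--discrete parallel becomes explicit.

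The paper's route is more hands-on and specific to this particular $u$. You also supply details the paper leaves as standard: uniqueness for the boundary-value problem, the derivation of the eigenpairs (including ruling out $\lambda\le 0$), and the appeal to completeness to justify the expansion.
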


\begin{proof}
The boundary value problem is solved directly:
\[
u''(x)=-1,
\]
so
\[
u(x)=-\frac{x^2}{2}+C_1x+C_2.
\]
The boundary conditions give $C_2=0$ and $C_1=1/2$, hence
\[
u(x)=\frac{x(1-x)}2.
\]

The Dirichlet eigenfunctions are the standard sine functions
\[
\psi_n(x)=\sqrt2\sin(n\pi x),
\]
with eigenvalues $(n\pi)^2$.

For the Fourier coefficient, write
\[
b_n
=
\frac{\sqrt2}{2}
\int_0^1 x(1-x)\sin(n\pi x)\,dx.
\]
Let $a=n\pi$. One integration by parts gives
\[
\int_0^1 x(1-x)\sin(ax)\,dx
=
\frac1a
\int_0^1(1-2x)\cos(ax)\,dx,
\]
since $x(1-x)=0$ at both endpoints. A second integration by parts gives
\[
\int_0^1(1-2x)\cos(ax)\,dx
=
\frac{2}{a}\int_0^1\sin(ax)\,dx
=
\frac{2(1-\cos a)}{a^2}.
\]
Therefore
\[
b_n
=
\frac{\sqrt2}{(n\pi)^3}
\bigl(1-\cos(n\pi)\bigr)
=
\frac{\sqrt2}{(n\pi)^3}
\bigl(1-(-1)^n\bigr).
\]
Thus $b_n=0$ for even $n$, while for odd $n$,
\[
b_n=\frac{2\sqrt2}{(n\pi)^3}.
\]
\end{proof}

The landscape is symmetric about $x=\frac12$. Indeed, define
\[
v(x)=u(1-x).
\]
Then
\[
-v''(x)=1,
\qquad
v(0)=v(1)=0.
\]
Thus $v$ satisfies the same boundary value problem as $u$. By
uniqueness,
\[
u(1-x)=u(x).
\]

The eigenfunctions transform under the same reflection according to
\[
\psi_n(1-x)=(-1)^{n+1}\psi_n(x).
\]
Consequently, if
\[
b_n=\int_0^1u(x)\psi_n(x)\,dx,
\]
then for even $n$ the change of variables $x\mapsto1-x$ gives
\[
b_n=-b_n,
\]
and hence
\[
b_n=0.
\]
Thus the parity of the Fourier coefficients is a direct consequence
of the reflection symmetry.

This parity will reappear after discretization, where the corresponding
spectral coefficients acquire an exact trigonometric form.

\section{Toral dynamics and the Lefschetz fixed point theorem}
\label{sec:toral}
%==============================================================

The parity calculation above is a first example of a symmetry organizing
spectral data. A different and classical example comes from the dynamics
of the torus.

The map now known as the \emph{Arnold cat map} was introduced and studied
by Vladimir Arnold in the 1960s. The name comes from Arnold's use of an
image of a cat to illustrate the action of the map on the torus; the
example was presented in Arnold and Avez's \emph{Problèmes ergodiques de
la mécanique classique} \citep{ArnoldAvez1967}. It is now a standard
example of a hyperbolic toral automorphism, meaning an invertible linear
map of a torus with no eigenvalue of absolute value $1$. We use it here
not for its chaotic behavior, but because its fixed-point calculation
makes an alternating pattern especially transparent; see also
\citep{Keating1991}.

Let
\[
A=
\begin{pmatrix}
1&1\\
1&2
\end{pmatrix}
\in SL(2,\mathbb Z),
\]
and write
\[
\mathbb T^2=\mathbb R^2/\mathbb Z^2.
\]
The associated toral automorphism is
\[
T_A:\mathbb T^2\to\mathbb T^2,
\qquad
T_A(x)=Ax\pmod{\mathbb Z^2}.
\]
Explicitly,
\[
T_A(x,y)=(x+y,x+2y)\pmod 1.
\]
The notation $\pmod 1$ means that opposite sides of the unit square
are identified, so the square represents the torus.

The determinant is
\[
\det A=1.
\]
Hence $A$ has an integer inverse,
\[
A^{-1}=
\begin{pmatrix}
2&-1\\
-1&1
\end{pmatrix},
\]
and $T_A$ is invertible and area-preserving.

The characteristic polynomial is
\[
\det(A-\lambda I)
=
\begin{vmatrix}
1-\lambda&1\\
1&2-\lambda
\end{vmatrix}
=
\lambda^2-3\lambda+1.
\]
Thus
\[
\lambda_\pm
=
\frac{3\pm\sqrt5}{2}.
\]
In particular,
\[
0<\lambda_-<1<\lambda_+.
\]
One direction is therefore expanded and the other contracted.

The eigenvectors also display the same quadratic arithmetic. From
\[
(A-\lambda I)
\begin{pmatrix}x\\y\end{pmatrix}=0
\]
we obtain
\[
y=(\lambda-1)x.
\]
Thus the slopes of the two eigendirections are
\[
\lambda_\pm-1
=
\frac{1\pm\sqrt5}{2}.
\]
In particular, the expanding direction has slope
\[
\frac{1+\sqrt5}{2},
\]
the golden ratio. The quadratic field $\mathbb Q(\sqrt5)$ therefore
appears naturally already in this elementary dynamical example.

\subsection{Fixed points}

A point $x\in\mathbb T^2$ is fixed when
\[
Ax\equiv x\pmod{\mathbb Z^2},
\]
or equivalently
\[
(A-I)x\in\mathbb Z^2.
\]
Here
\[
A-I=
\begin{pmatrix}
0&1\\
1&1
\end{pmatrix},
\qquad
\det(A-I)=-1.
\]
Since $|\det(A-I)|=1$, the matrix $A-I$ is invertible over
$\mathbb Z$ and there is exactly one fixed point on the torus:
\[
x=(0,0).
\]

\subsection{The alternating trace}

The fixed-point calculation is also encoded by the Lefschetz number.
For a continuous map of a compact space, the Lefschetz number is an
alternating sum of traces of the induced maps on the successive
cohomology groups. In the present two-dimensional torus, those three
traces are
\[
1,\qquad \operatorname{tr}(A),\qquad \det(A),
\]
so
\[
L(T_A)
=
1-\operatorname{tr}(A)+\det(A).
\]
For our matrix,
\[
\operatorname{tr}(A)=3,
\qquad
\det(A)=1,
\]
and therefore
\[
L(T_A)=1-3+1=-1.
\]

There is a useful elementary identity behind this formula. For any
$2\times2$ matrix,
\[
\det(I-A)
=
1-\operatorname{tr}(A)+\det(A).
\]
Hence
\[
L(T_A)=\det(I-A).
\]
In our example,
\[
\det(I-A)=-1,
\]
in agreement with the direct fixed-point calculation above.

In the usual topological language, the three terms
\[
1,\quad -\operatorname{tr}(A),\quad \det(A)
\]
come from the three cohomological degrees of the torus. For the present
paper, no further topological machinery is needed: the important point
is the concrete alternating combination.

The Lefschetz fixed point theorem says that if $L(T_A)\neq0$, then
$T_A$ has a fixed point. Here
\[
L(T_A)=-1\neq0,
\]
so the theorem recovers the fixed point already found explicitly.

The same determinant argument also survives reduction modulo an
integer. For every $N\ge2$, consider
\[
T_A:(\mathbb Z/N\mathbb Z)^2\to(\mathbb Z/N\mathbb Z)^2.
\]
A fixed point satisfies
\[
(A-I)x\equiv0\pmod N.
\]
Since
\[
\det(A-I)=-1
\]
is invertible modulo every $N$, the matrix $A-I$ is invertible over
$\mathbb Z/N\mathbb Z$. Thus the only fixed point is
\[
x=0.
\]

This gives the second appearance of the pattern that began with the
continuous landscape. There, reflection separates Fourier modes and
produces the factor
\[
1-(-1)^n.
\]
Here, the action on the torus produces the alternating combination
\[
1-\operatorname{tr}(A)+\det(A).
\]
The two constructions are not the same, but in both cases a symmetry
organizes the data, cancellation removes what is incompatible, and a
simple invariant records what remains.

\section{Discrete landscapes and arithmetic structure}
\label{sec:arith}
%==============================================================

Discretize $(0,1)$ on the $N-1$ interior lattice points with spacing $1/N$.
Let the discrete Laplacian be defined by
\[
(\Delta v)_j=v_{j+1}-2v_j+v_{j-1},
\qquad v_0=v_N=0,
\]
and let $u_N$ solve $-\Delta u_N=1$. The normalized eigenfunctions are
\[
\psi_{N,k}(j)=\sqrt{\frac2N}\sin\left(\frac{k\pi j}{N}\right),
\qquad 1\le k\le N-1,
\]
and
\[
c_k(N)=\sum_{j=1}^{N-1}u_N(j)\psi_{N,k}(j).
\]

\begin{proposition}[Exact discrete landscape]
\label{prop:discrete-landscape}
For every $N\ge2$,
\[
 u_N(j)=\frac{j(N-j)}2.
\]
\end{proposition}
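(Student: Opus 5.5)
The plan is to verify directly that the candidate $v_j=j(N-j)/2$ solves the discrete boundary value problem, and then to invoke uniqueness, in the same way the continuous landscape in Proposition~\ref{prop:continuous-landscape} was pinned down by its boundary conditions. So the argument has two steps: existence by direct computation, and uniqueness by invertibility of the discrete Dirichlet Laplacian.

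For existence, first check the boundary values: $v_0=0$ and $v_N=N\cdot 0/2=0$. Then compute the second difference at an interior point $1\le j\le N-1$. Since $j(N-j)=Nj-j^2$, the linear part $Nj$ has vanishing second difference. For the quadratic part,
\[
(j+1)^2-2j^2+(j-1)^2=2.
\]
Hence
\[
(\Delta v)_j=-\tfrac12\cdot 2=-1,
\qquad\text{so}\qquad -\Delta v=1 .
\]

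For uniqueness, suppose $w$ is the difference of two solutions. Then $\Delta w=0$ at every interior point and $w_0=w_N=0$. The relation $w_{j+1}=2w_j-w_{j-1}$ shows that $w$ is an arithmetic progression in $j$. Because it vanishes at both endpoints $j=0$ and $j=N$, it must vanish identically. An equivalent route is to note that the eigenvalues of $-\Delta$ on the sine basis $\psi_{N,k}$ are $4\sin^2(k\pi/2N)>0$ for $1\le k\le N-1$, so $-\Delta$ is invertible.

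The computation is routine, and there is no genuine obstacle. The only step needing care is stating uniqueness cleanly, so that $u_N$ is well defined as \emph{the} solution. The arithmetic-progression argument handles this without appealing to any spectral facts.
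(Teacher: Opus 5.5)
Your proposal is correct and matches the paper's proof: check that $v_j=j(N-j)/2$ satisfies $v_0=v_N=0$ and $v_{j+1}-2v_j+v_{j-1}=-1$, then invoke uniqueness of the discrete Dirichlet problem. The one difference is that you actually justify uniqueness, via the arithmetic-progression argument or the positivity of the eigenvalues $4\sin^2(k\pi/2N)$, whereas the paper only cites it.
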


\begin{proof}
Put $v_j=j(N-j)/2$. Then $v_0=v_N=0$ and
\[
v_{j+1}-2v_j+v_{j-1}=-1.
\]
Thus $v$ solves the discrete Poisson equation. Uniqueness of the Dirichlet
problem gives $u_N=v$.
\end{proof}

\begin{proposition}[Discrete parity]
\label{prop:parity-filter}
For every $N$ and every even $k$,
\[
c_k(N)=0.
\]
\end{proposition}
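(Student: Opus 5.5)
The plan is to mirror the continuous reflection argument from Section~\ref{sec:continuous} on the lattice, using the reflection $j\mapsto N-j$ of the interior points $\{1,\dots,N-1\}$. This map is a bijection of the index set, so we may reindex the sum defining $c_k(N)$ by it.

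The first step is to record how the two factors in the sum transform. For the landscape, Proposition~\ref{prop:discrete-landscape} gives the explicit formula $u_N(j)=j(N-j)/2$, which is visibly invariant: $u_N(N-j)=u_N(j)$. Alternatively, this follows from uniqueness for the discrete Dirichlet problem, since $j\mapsto N-j$ preserves the operator $-\Delta$ and the boundary conditions. For the eigenfunctions, we compute
\[
\sin\!\left(\frac{k\pi(N-j)}{N}\right)=\sin\!\left(k\pi-\frac{k\pi j}{N}\right)=-\cos(k\pi)\sin\!\left(\frac{k\pi j}{N}\right),
\]
which gives
\[
\psi_{N,k}(N-j)=(-1)^{k+1}\psi_{N,k}(j).
\]
This is the exact discrete analogue of the continuous parity relation.

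The second step is to substitute $j\mapsto N-j$ in $c_k(N)=\sum_{j=1}^{N-1}u_N(j)\psi_{N,k}(j)$. This yields
\[
c_k(N)=(-1)^{k+1}c_k(N).
\]
For even $k$ the relation reads $c_k(N)=-c_k(N)$, so $c_k(N)=0$.

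There is no real obstacle. The only point needing care is that the reflection maps the interior index range $\{1,\dots,N-1\}$ onto itself, which is immediate. If $k$ is allowed outside $1\le k\le N-1$, the same computation still applies verbatim.
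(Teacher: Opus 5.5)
Your proposal is correct and follows essentially the same argument as the paper: the paper also combines the symmetry $u_N(j)=u_N(N-j)$ with $\psi_{N,k}(N-j)=(-1)^{k+1}\psi_{N,k}(j)$ and reindexes to get the cancellation for even $k$. You simply spell out the sine identity and the bijectivity of $j\mapsto N-j$ on $\{1,\dots,N-1\}$, which the paper leaves implicit.
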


\begin{proof}
The identity $u_N(j)=u_N(N-j)$ and
\[
\psi_{N,k}(N-j)=(-1)^{k+1}\psi_{N,k}(j)
\]
give cancellation when $k$ is even.
\end{proof}

\begin{theorem}[Explicit spectral coefficients]
\label{thm:main}
For odd $k$,
\[
 c_k(N)=\frac14\sqrt{\frac2N}
 \cot\left(\frac{k\pi}{2N}\right)
 \csc^2\left(\frac{k\pi}{2N}\right).
\]
For even $k$, $c_k(N)=0$.
\end{theorem}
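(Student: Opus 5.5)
The even case is exactly Proposition~\ref{prop:parity-filter}, so only odd $k$ needs work. The plan is to avoid summing $\sum j(N-j)\sin(k\pi j/N)$ by brute force. Instead we use that $\psi_{N,k}$ is an eigenvector of the symmetric operator $-\Delta$ with Dirichlet conditions. Writing $\theta=k\pi/N$, the eigenvalue is
\[
\mu_k=2-2\cos\theta=4\sin^2\!\left(\frac{k\pi}{2N}\right).
\]
Since $-\Delta u_N=\mathbf 1$ by Proposition~\ref{prop:discrete-landscape}, summation by parts gives
\[
\mu_k\,c_k(N)=\langle u_N,-\Delta\psi_{N,k}\rangle=\langle -\Delta u_N,\psi_{N,k}\rangle=\sum_{j=1}^{N-1}\psi_{N,k}(j).
\]
The boundary terms vanish because both $u_N$ and $\psi_{N,k}$ vanish at $j=0$ and $j=N$. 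This reduces the problem to a single geometric-type sum.

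Next we evaluate $S=\sum_{j=1}^{N-1}\sin(j\theta)$. The cleanest route is the standard identity
\[
S=\frac{\sin\bigl((N-1)\theta/2\bigr)\sin\bigl(N\theta/2\bigr)}{\sin(\theta/2)},
\]
or equivalently taking the imaginary part of $\sum e^{ij\theta}$. For odd $k$ we have $N\theta/2=k\pi/2$, so $\sin(N\theta/2)=\pm1$. Also
\[
\sin\bigl((N-1)\theta/2\bigr)=\sin\!\left(\frac{k\pi}{2}-\frac{\theta}{2}\right)=\sin\frac{k\pi}{2}\,\cos\frac{\theta}{2}.
\]
The two signs multiply to $\sin^2(k\pi/2)=1$, so $S=\cot(k\pi/(2N))$.

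Combining the two steps gives
\[
c_k(N)=\sqrt{\frac2N}\,\frac{\cot(k\pi/2N)}{4\sin^2(k\pi/2N)}=\frac14\sqrt{\frac2N}\cot\!\left(\frac{k\pi}{2N}\right)\csc^2\!\left(\frac{k\pi}{2N}\right),
\]
which is the claimed formula. None of the steps is deep. The one needing care is the sign bookkeeping in $S$, which is where the parity of $k$ enters. The odd case must produce exactly $\cot$ with a positive sign. The even case would give $S=0$ from the factor $\sin(N\theta/2)=\sin(k\pi/2)=0$, which serves as a consistency check against Proposition~\ref{prop:parity-filter}. One should also confirm that the discrete Green identity has no leftover boundary terms. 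This holds because the sums run over interior points with zero Dirichlet values at $j=0,N$.
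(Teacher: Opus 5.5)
Your proposal is correct and follows the same route as the paper. Self-adjointness of the Dirichlet Laplacian and the eigenvalue $\mu_k=4\sin^2(k\pi/2N)$ reduce the problem to $\mu_k c_k(N)=\sqrt{2/N}\sum_{j=1}^{N-1}\sin(k\pi j/N)$, and for odd $k$ this sum equals $\cot(k\pi/2N)$; your product formula and sign check simply spell out the step the paper calls the ``elementary telescoping identity.''
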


\begin{proof}
The eigenfunctions diagonalize the discrete Laplacian with eigenvalue
\[
\mu_k=4\sin^2\left(\frac{k\pi}{2N}\right).
\]
Hence
\[
c_k(N)\mu_k
=\sqrt{\frac2N}\sum_{j=1}^{N-1}
\sin\left(\frac{k\pi j}{N}\right).
\]
For odd $k$, the elementary telescoping identity gives
\[
\sum_{j=1}^{N-1}\sin\left(\frac{k\pi j}{N}\right)
=\cot\left(\frac{k\pi}{2N}\right),
\]
which yields the formula. The even case was proved above.
\end{proof}

\subsection{Where the coefficients live}
\label{subsec:ambient-field}

Put
\[
A_{N,k}=\cot\left(\frac{k\pi}{2N}\right)
\csc^2\left(\frac{k\pi}{2N}\right),
\qquad
B_N=\sqrt{\frac2N}.
\]
Then $c_k(N)=A_{N,k}B_N/4$. Since $A_{N,k}$ is a real rational function
of $\zeta_{4N}^k$,
\[
A_{N,k}\in\mathbb Q(\zeta_{4N})^+.
\]
Moreover, the classical quadratic-subfield criterion gives
\[
K_N:=\mathbb Q\left(\zeta_{4N},\sqrt{\frac2N}\right)^+
=
\begin{cases}
\mathbb Q(\zeta_{4N})^+, & N\text{ even},\\[3pt]
\mathbb Q(\zeta_{4N})^+(\sqrt{2N}), & N\text{ odd},
\end{cases}
\]
and
\[
[K_N:\mathbb Q]=2\varphi(N).
\]
Thus every $c_k(N)$ lies in an abelian extension of degree $2\varphi(N)$.
We use no more of the cyclotomic machinery than this containment and the
corresponding action on roots of unity.

For $a\in(\mathbb Z/4N\mathbb Z)^\times$, let
$\sigma_a(\zeta_{4N})=\zeta_{4N}^a$. Writing
$\delta(a)=(-1)^{(a-1)/2}$, direct substitution into the rational expression
for $A_{N,k}$ gives
\[
\sigma_a(A_{N,k})=\delta(a)A_{N,ak}.
\]
The remaining factor satisfies $\sigma(B_N)=\pm B_N$ for every
$\mathbb Q$-automorphism, since $B_N^2\in\mathbb Q$.

Define $\tau_N$ by the action $\zeta_{4N}\mapsto\zeta_{4N}^{2N-1}$,
with the sign of $B_N$ chosen to make $\tau_N$ fix $c_1(N)$. Since
$(2N-1)^2\equiv1\pmod{4N}$, this is an involution. The identities
\[
A_{N,2N-k}=-A_{N,k}
\]
and
\[
(2N-1)k\equiv2N-k\pmod{4N}
\qquad(k\text{ odd})
\]
show that
\[
\tau_N(c_k(N))=c_k(N)
\]
for every odd $k$.

\begin{corollary}[Degree bound]
\label{cor:degree-upper-bound}
For every $N\ge2$ and every odd $k<N$,
\[
[\mathbb Q(c_k(N)):\mathbb Q]\le\varphi(N).
\]
\end{corollary}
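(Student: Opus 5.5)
The plan is to use Galois theory inside the field $K_N$ set up above. Since $c_k(N)\in K_N$ and $K_N/\mathbb Q$ is abelian of degree $2\varphi(N)$, the field $\mathbb Q(c_k(N))$ is a subfield of $K_N$. By the Galois correspondence, its degree over $\mathbb Q$ equals the index in $\operatorname{Gal}(K_N/\mathbb Q)$ of the stabilizer of $c_k(N)$. So it suffices to exhibit a subgroup of order at least $2$ fixing $c_k(N)$. This gives
\[
[\mathbb Q(c_k(N)):\mathbb Q]\le \frac{2\varphi(N)}{2}=\varphi(N).
\]

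\textbf{Step 1: $\tau_N$ is a genuine element of $\operatorname{Gal}(K_N/\mathbb Q)$.} Since $2N-1$ is odd and coprime to $N$, we have $2N-1\in(\mathbb Z/4N\mathbb Z)^\times$. Hence $\zeta_{4N}\mapsto\zeta_{4N}^{2N-1}$ defines an automorphism of $\mathbb Q(\zeta_{4N})$. It extends to $\mathbb Q(\zeta_{4N},\sqrt{2/N})$ in a way sending $B_N\mapsto\pm B_N$, and either sign can be chosen unless $B_N$ already lies in the cyclotomic field, in which case the sign is forced.

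It commutes with complex conjugation because the Galois group of the compositum is abelian. Therefore it restricts to $K_N$, and it is an involution there.

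\textbf{Step 2: $\tau_N$ fixes every $c_k(N)$, $k$ odd.} By the displayed identities,
\[
\tau_N(A_{N,k})=\delta(2N-1)A_{N,2N-k}=-\delta(2N-1)A_{N,k}.
\]
Write $\tau_N(B_N)=\epsilon B_N$. The sign $\epsilon$ was chosen so that $\tau_N(c_1(N))=c_1(N)$, which forces $-\delta(2N-1)\epsilon=1$. This relation does not depend on $k$, so
\[
\tau_N(c_k(N))=-\delta(2N-1)\epsilon\, c_k(N)=c_k(N)
\]
for every odd $k$.

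\textbf{Step 3: $\tau_N$ is nontrivial on $K_N$.} This is the main obstacle. If $\tau_N$ were the identity on $K_N$, the stabilizer would only be guaranteed to be trivial, and the bound would fail. I would argue as follows.

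First, $2N-1\not\equiv\pm1\pmod{4N}$ for $N\ge2$. Indeed, $2N-1\equiv1$ would need $4N\mid 2N-2$, and $2N-1\equiv-1$ would need $4N\mid 2N$; neither holds.

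Since $\mathbb Q(\zeta_{4N})^+$ is the fixed field of $\{\pm1\}$, the automorphism $\sigma_{2N-1}$ acts nontrivially on $\mathbb Q(\zeta_{4N})^+\subseteq K_N$. Concretely, it sends $\cos(\pi/2N)$ to $\cos((2N-1)\pi/2N)=-\cos(\pi/2N)\neq0$.

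Thus $\tau_N$ has order exactly $2$ in $\operatorname{Gal}(K_N/\mathbb Q)$ and fixes $c_k(N)$. The stabilizer therefore has order $\ge2$, and the index computation gives the stated bound.
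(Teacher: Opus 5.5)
Your proof is correct and is essentially the paper's argument. The paper takes the fixed field $E\subseteq K_N$ of $\tau_N$, shows $\gamma_N=2\cos(\pi/2N)\notin E$ because $\tau_N(\gamma_N)=-\gamma_N$, and concludes $[\mathbb Q(c_k(N)):\mathbb Q]\le[E:\mathbb Q]\le 2\varphi(N)/2$ by the tower law, which is your stabilizer-index count phrased via fixed fields; the forced-sign case you flag but leave unverified ($N$ even, where $B_N\in\mathbb Q(\zeta_{4N})$) is harmless, since then $\sqrt{2N}\in\mathbb Q(\zeta_{2N})$, on which $\sigma_{2N-1}$ acts as complex conjugation, so $\epsilon=1=-\delta(2N-1)$ exactly as your Step 2 requires.
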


\begin{proof}
Set
\[
\gamma_N:=\zeta_{4N}+\zeta_{4N}^{-1}=2\cos\!\left(\frac{\pi}{2N}\right).
\]
This is a real number lying in $\mathbb Q(\zeta_{4N})^+\subseteq K_N$, and it is nonzero
because $0<\pi/(2N)<\pi/2$ for $N\ge2$, so $\cos(\pi/2N)>0$.

Since $\zeta_{4N}^{2N}=e^{i\pi}=-1$,
\[
\sigma_{2N-1}(\zeta_{4N})=\zeta_{4N}^{2N-1}=\zeta_{4N}^{2N}\zeta_{4N}^{-1}=-\zeta_{4N}^{-1},
\qquad
\sigma_{2N-1}(\zeta_{4N}^{-1})=\zeta_{4N}^{1-2N}=\zeta_{4N}\cdot\zeta_{4N}^{-2N}=-\zeta_{4N},
\]
and since $\tau_N$ acts on $\mathbb Q(\zeta_{4N})^+$ as $\sigma_{2N-1}$,
\[
\tau_N(\gamma_N)=-\zeta_{4N}^{-1}-\zeta_{4N}=-\gamma_N\neq\gamma_N .
\]

Let $E:=\{x\in K_N:\tau_N(x)=x\}$. Since $\tau_N$ is a field automorphism of $K_N$,
$E$ is a subfield of $K_N$ (fixed points of a ring automorphism are closed under
$+,-,\times$, and under inverses since $\tau_N(x^{-1})=\tau_N(x)^{-1}$), and
$\mathbb Q\subseteq E$ because every field automorphism fixes the prime field.
By construction $\gamma_N\notin E$.

Because $\gamma_N\notin E$, the extension $E(\gamma_N)/E$ has degree $\ge2$
(degree $1$ would mean $\gamma_N\in E$), so
\[
[K_N:E]\;\ge\;[E(\gamma_N):E]\;\ge\;2 .
\]
By the tower law and $[K_N:\mathbb Q]=2\varphi(N)$,
\[
[E:\mathbb Q]=\frac{[K_N:\mathbb Q]}{[K_N:E]}\le\frac{2\varphi(N)}{2}=\varphi(N).
\]

Finally, $\tau_N(c_k(N))=c_k(N)$ places $c_k(N)$ in $E$ by definition of $E$,
so $\mathbb Q(c_k(N))\subseteq E$, and monotonicity of degree under inclusion of
finite extensions of $\mathbb Q$ gives
\[
[\mathbb Q(c_k(N)):\mathbb Q]\le[E:\mathbb Q]\le\varphi(N). \qedhere
\]
\end{proof}

\subsection{The first coefficient and the degree conjecture}
\label{subsec:first-coefficient}

The upper bound above suggests that the first coefficient may be as large as
it can be. The closed form makes this question especially concrete. Put
\[
f(x)=\cot x\,\csc^2x=\frac{\cos x}{\sin^3x}.
\]
Then
\[
c_1(N)=\frac14\sqrt{\frac2N}
\,f\left(\frac{\pi}{2N}\right).
\]
On $(0,\pi)$,
\[
f'(x)=-\frac{\sin^2x+3\cos^2x}{\sin^4x}<0,
\]
and
\[
f(\pi-x)=-f(x).
\]
Thus the elementary trigonometric function occurring in $c_1(N)$ is
strictly monotone on the relevant interval. This makes it plausible that
few algebraic conjugates can coincide. We use this observation as motivation,
not as a proof, for the conjecture below.

\begin{conjecture}[Chandra--Jain Conjecture]
\label{conj:degree}
For every integer $N\ge3$,
\[
[\mathbb Q(c_1(N)):\mathbb Q]=\varphi(N).
\]
\end{conjecture}

The conjecture says that the first spectral coefficient has the largest
degree allowed by the upper bound in Corollary~\ref{cor:degree-upper-bound}.
We next give exact computational evidence.

\subsection{Exact computational evidence}
\label{subsec:exact-verification}

The closed form also gives a direct exact computation of the minimal
polynomial of $c_1(N)$. Set
\[
t=\cot\left(\frac{\pi}{2N}\right).
\]
Since $\csc^2x=1+\cot^2x$, we have
\[
c_1(N)=\frac14\sqrt{\frac2N}\,t(1+t^2),
\]
and hence
\[
8N c_1(N)^2=(t+t^3)^2.
\]
Moreover, because $\operatorname{Im}(t+i)^{2N}=0$,
$t$ is a root of the integer polynomial
\[
Q_{2N}(t)=\operatorname{Im}(t+i)^{2N}.
\]
We factor $Q_{2N}$ over $\mathbb Q$, select the irreducible factor containing
$\cot(\pi/(2N))$, and eliminate $t$ from
\[
Q_{2N}(t)=0,
\qquad
8Ny-(t+t^3)^2=0,
\]
where $y=c_1(N)^2$. Replacing $y$ by $x^2$ gives an exact polynomial having
$c_1(N)$ as a root. Factoring over $\mathbb Q$ then identifies its minimal
polynomial.

This calculation was carried out for every
\[
3\le N\le30.
\]
In all $28$ cases, the minimal polynomial has degree $\varphi(N)$.

\begin{table}[h!]
\centering
\renewcommand{\arraystretch}{1.15}
\begin{tabular}{c|rrrrrrrrrrrrrr}
$N$ & 3&4&5&6&7&8&9&10&11&12&13&14&15&16\\
\hline
$\deg_{\mathbb Q}c_1(N)$&2&2&4&2&6&4&6&4&10&4&12&6&8&8\\
$\varphi(N)$&2&2&4&2&6&4&6&4&10&4&12&6&8&8\\
\end{tabular}
\vspace{3pt}
\begin{tabular}{c|rrrrrrrrrrrrrr}
$N$ & 17&18&19&20&21&22&23&24&25&26&27&28&29&30\\
\hline
$\deg_{\mathbb Q}c_1(N)$&16&6&18&8&12&10&22&8&20&12&18&12&28&8\\
$\varphi(N)$&16&6&18&8&12&10&22&8&20&12&18&12&28&8\\
\end{tabular}
\caption{Exact computational verification of
$\deg_{\mathbb Q}c_1(N)=\varphi(N)$ for $3\le N\le30$.}
\label{tab:degree-verification}
\end{table}

\begin{figure}[htbp]
    \centering
    \includegraphics[width=0.88\textwidth]{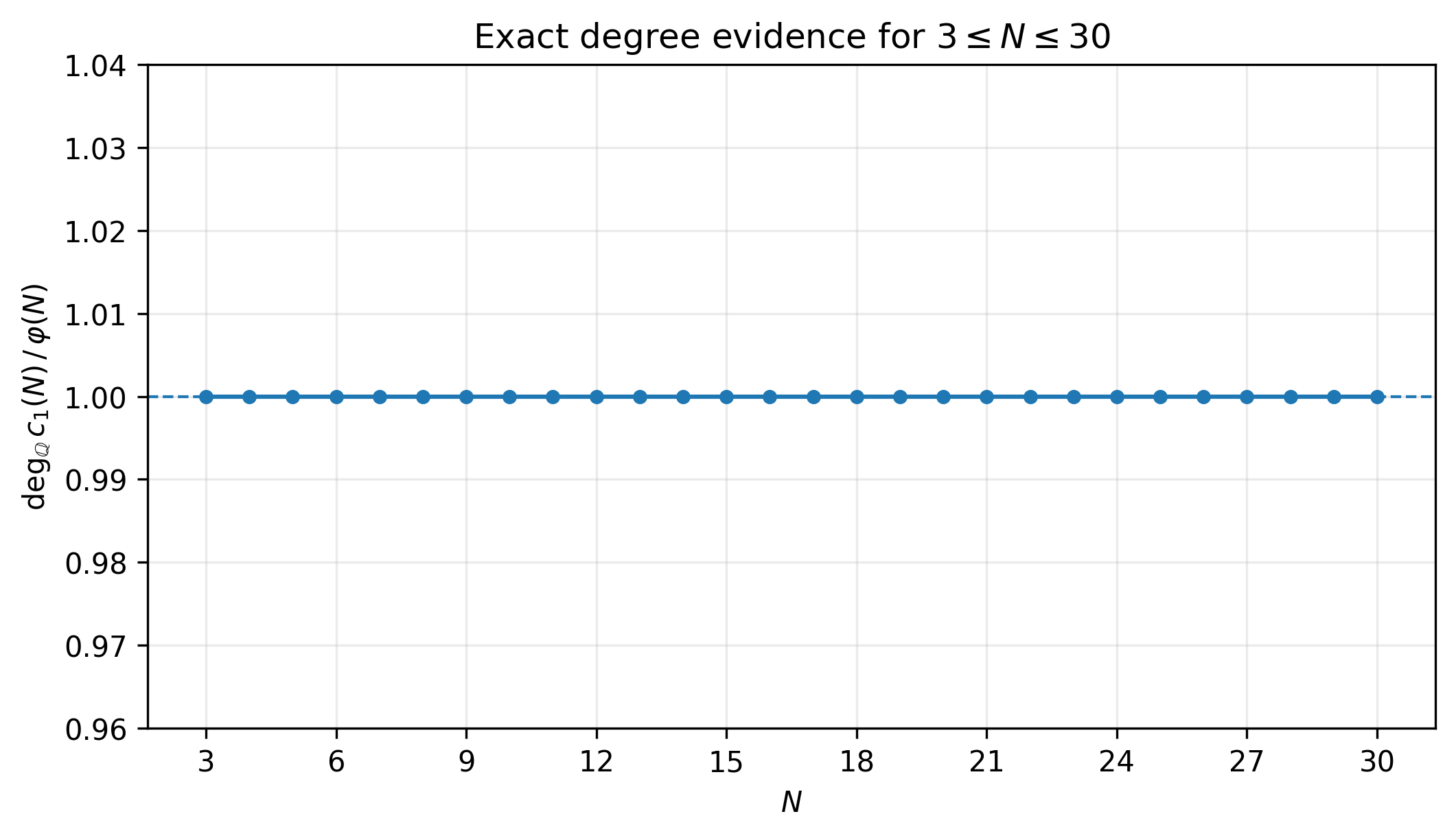}
    \caption{Exact degree evidence. The ratio
    $\deg_{\mathbb Q}c_1(N)/\varphi(N)$ is exactly $1$ for every
    $3\le N\le30$.}
    \label{fig:degree-ratio}
\end{figure}

For example, the exact minimal polynomial at $N=5$ is
\[
5x^4-130x^2+4,
\]
which is irreducible over $\mathbb Q$. Thus $N=5$ is not an exceptional
collapse: its degree is $4=\varphi(5)$. At $N=13$, the exact minimal
polynomial has degree $12=\varphi(13)$.

As an independent check, the calculation was repeated for
\[
N=3,4,5,7,8,9
\]
using a different exact construction. In that computation $c_1(N)$ was
expressed directly in terms of a primitive $4N$-th root of unity, and its
minimal polynomial was obtained by a separate exact resultant and
factorization procedure. The two methods agree coefficient-for-coefficient
for all six values. In particular, both methods give
\[
32x^4-512x^3-272x^2+32x+1
\]
for $N=8$, and
\[
243x^6-119394x^4+3672x^2-8
\]
for $N=9$.

The full computation uses exact polynomial arithmetic. High-precision
numerical evaluation is used only to identify the relevant factor among
finitely many exact factors; no PSLQ, interpolation, or numerical polynomial
fitting is used to obtain the reported polynomials. The complete computation
is included with the supplementary material.

Thus the computation gives $28$ consecutive exact confirmations of
Conjecture~\ref{conj:degree}. It does not constitute a proof for arbitrary
$N$.

\subsection{Higher odd coefficients}
\label{subsec:higher-coefficients}

There is a simple reduction that makes the remaining question more precise.
Let
\[
d=\gcd(k,N),
\qquad
N=dN_0,
\qquad
k=dk_0.
\]
Then $\gcd(k_0,N_0)=1$, and the explicit formula gives the exact identity
\[
 c_k(N)=\frac{1}{\sqrt d}\,c_{k_0}(N_0).
\]
Thus the case $\gcd(k,N)>1$ is not a new trigonometric problem: it is a
question about the single quadratic factor $\sqrt d$.

For $d>1$, the displayed reduction shows that the remaining arithmetic
question is whether adjoining the single quadratic factor $\sqrt d$ enlarges
the field generated by the reduced coefficient. In the representative
non-coprime cases below, the exact degrees illustrate both possibilities.
For example,
\[
\begin{array}{c|c|c|c}
(N,k)&d=\gcd(N,k)&\varphi(N/d)&[\mathbb Q(c_k(N)):\mathbb Q]\\
\hline
(6,3)&3&1&2\\
(9,3)&3&2&2\\
(10,5)&5&1&2\\
(12,3)&3&2&4\\
(18,3)&3&2&2\\
(21,7)&7&2&2\\
(30,3)&3&4&8
\end{array}
\]
These examples show that the naive statement
$\deg c_k(N)=\varphi(N)$ for every odd $k$ is false. For instance,
\[
c_3(9)=\frac{\sqrt6}{3},
\qquad
[\mathbb Q(c_3(9)):\mathbb Q]=2<\varphi(9)=6.
\]
These examples are not intended as a general degree formula. We leave the
higher-mode degree problem open.

%==============================================================
\subsection*{The recurring pattern}
%==============================================================

The three settings considered above are different, but the same simple
question has appeared in each. Reflection separates the Fourier modes into
symmetry classes; the cat map combines traces with alternating signs; and the
arithmetic symmetries identify algebraic conjugates. In each case, symmetry
reduces the amount of independent information that remains.

The analogy is structural, not literal. Its role is to connect the elementary
parity calculation at the beginning of the paper with the arithmetic degree
question at the end. The main arithmetic statement remains the conjecture
\[
[\mathbb Q(c_1(N)):\mathbb Q]=\varphi(N).
\]

\section{Open problems}
\label{sec:open}
%==============================================================

The Chandra--Jain conjecture (Conjecture~\ref{conj:degree}) has been proved
by Q.~Zhou~\cite{Zhou2026}; see the note added below.
For higher odd coefficients, a separate question remains open: to determine
when the quadratic factor $\sqrt d$ in the reduction above enlarges the field.

Two broader questions are natural:
\begin{itemize}
\item Can the same reduction and degree analysis be extended to other
one-dimensional domains?
\item Which parts of the reflection, dynamical, and arithmetic correspondences
survive for higher-dimensional landscape functions?
\end{itemize}

\section*{Acknowledgements}
	%==============================================================
	
	Symbolic computations were performed with exact arithmetic.
	This work builds on the landscape theory of Filoche and Mayboroda and on
	the arithmetic spectral theory of Keating. Code is available from authors upon request.
	
	%==============================================================

\medskip
\noindent\textbf{Note added (v2, August 2026).}
Qiping Zhou has posted a proof of Conjecture~\ref{conj:degree}
(the Chandra--Jain conjecture)~\cite{Zhou2026}.
The argument determines the stabilizer of $c_1(N)$ in
$\GalQ(K_N/\Q)$ using the Galois action of
Section~\ref{subsec:ambient-field} and the strict monotonicity of
$f$ established in Section~\ref{subsec:first-coefficient},
then applies the orbit--stabilizer theorem to conclude
$[\Q(c_1(N)):\Q]=\varphi(N)$ for all $N\ge3$.
We are grateful to Dr.~Zhou for this resolution.

	%==============================================================

\end{document}